\newtheorem{thm}{Theorem}[section]
\newtheorem{theorem}{Theorem}
\newtheorem{corollary}[theorem]{Corollary}
\newtheorem{conjecture}[theorem]{Conjecture}
\theoremstyle{definition}
\newtheorem{question}[thm]{Question}
\newtheorem{rem}[thm]{Remark}
\newtheorem{case}{Case}[section]
\newtheorem{subcase}{Case}[case]
\newtheorem*{ack}{Acknowledgements}
\numberwithin{equation}{section}
\DeclareMathOperator{\Susp}{Susp}
\DeclareMathOperator{\curv}{curv}
\DeclareMathOperator{\diam}{diam}
\DeclareMathOperator{\vol}{vol}
\newcommand{\Real}{\mathrm{\mathbb{R}}}
\newcommand{\bpt}{\mathrm{B(pt)}}
\newcommand{\bstwo}{\mathrm{B}(S_2)}
\newcommand{\bsfour}{\mathrm{B}(S_4)}
\newcommand{\bptwo}{\mathrm{B}(\mathbb{R}P^2)}
\newcommand{\csum}{\operatornamewithlimits{\#}}
\begin{document}



\title[On topological rigidity of Alexandrov $3$-spaces]{On topological rigidity of Alexandrov $3$-spaces 
}

\author[N.~B\'arcenas]{No\'e~B\'arcenas $^\dagger$}
\author[J.~N\'u\~nez-Zimbr\'on]{Jes\'us N\'u\~nez-Zimbr\'on $^\ddagger$}

\address[N.~B\'arcenas ]{Centro de Ciencias Matem\'aticas, UNAM, Campus Morelia, 59089, Morelia, Michoac\'an, M\'exico}
\email{barcenas@matmor.unam.mx}

\address[J.~N\'u\~nez-Zimbr\'on]{Centro de Ciencias Matem\'aticas, UNAM, Campus Morelia, 59089, Morelia, Michoac\'an, M\'exico}
\email{zimbron@matmor.unam.mx}


\thanks{$^\dagger$ Supported  by  CONACYT- foundational  Research  Grant 250747, PAPIIT  Grant IA 100117. $^\ddagger$ Supported by a DGAPA-UNAM postdoctoral fellowship.   }


\date{\today}


\subjclass[2010]{53C23, 57S15, 57S25}
\keywords{Collapse, Alexandrov space, Borel conjecture}


\begin{abstract}
In this note we prove the Borel Conjecture for closed, irreducible and sufficiently collapsed three-dimensional Alexandrov spaces. We also pose several questions  related  to  the  characterization  of  fundamental  groups  of three-dimensional Alexandrov spaces, finite  groups  acting  on  them  and rigidity results. 
\end{abstract}

\maketitle



\section{Introduction and results}
\label{section:introduction}

Alexandrov spaces are inner metric spaces which admit a lower sectional curvature bound in a synthetic sense. They constitute a generalization of the class of complete Riemannian manifolds with a lower sectional curvature bound and since their introduction they have proven to be a natural setting to address geometric-topological questions of a global nature. Therefore, a central problem is to determine whether what is already known in the smooth or topological settings still holds in Alexandrov geometry. 

Regarding topological rigidity of spaces, an important conjecture originally formulated in the topological manifold category, is the \textit{Borel conjecture}. It asserts that if two closed, aspherical $n$-manifolds are homotopy equivalent, then they are homeomorphic. The proof of this conjecture in the three-dimensional case is a consequence of Perelman's resolution of the Geometrization Conjecture \cite{Por}.

On the  other  hand, in  high dimension (meaning greater or  equal than  five),  the  Borel Conjecture  for  an  aspherical manifold with  fundamental  group $G$ is  consequence  of the  Farrell-Jones  Conjecture in  Algebraic $K$- and $L$-Theory for  the  group $G$ \cite{LuRe}. A  lot  of  effort  in geometric topology and  surgery  theory  has  been  devoted  to prove  the  Borel  conjecture in  many  cases by these methods, which rely on  transversality  arguments which  are  not  available  for  the  study  of topological  rigidity  of  low  dimensional manifolds.   

 We  will  present  in the  following  note  a  series  of  questions related  to generalizations of the Borel conjecture outside of the manifold category. Steps  in  this  direction  have been obtained,  for example, for  $\mathrm{CAT}(0)$-spaces as  a  consequence  of  the  Farrell Jones-Conjecture \cite{BarLuc}, and  in  another  direction  for certain classes of topological orbifolds \cite{TakYok} as  a  consequence  of  classification  efforts  in three  dimensional geometric  topology beyond  manifolds.
 
   Negative  results concerning  the   topological rigidity  of  singular  spaces  of  geometric  nature,  such as  the   Coxeter  complex have been obtained in  \cite{Star}. 
  
  It is therefore natural to inquire whether the Borel conjecture is still valid for closed Alexandrov $3$-spaces (cf. \cite[Remark 3.12]{GalSurvey}). These spaces are either topological $3$-manifolds or are homeomorphic to quotients of smooth $3$-manifolds by smooth orientation-reversing involutions with only isolated fixed points (see \cite{GalGui}).

In this article we address the validity of the Borel Conjecture for the class of \textit{sufficiently collapsed} and \textit{irreducible} closed Alexandrov $3$-spaces. The definition of \textit{irreducibility} for a closed Alexandrov $3$-space was introduced in \cite{GalGuiNun}. Let us recall that a closed Alexandrov $3$-space $X$ is \textit{irreducible} if every embedded $2$-sphere in $X$ bounds a $3$-ball and, in the case that the set of topologically singular points of $X$ is non-empty, it is further
required that every $2$-sided $\mathbb{R}P^2$ bound a $K(\mathbb{R}P^2)$, a cone over a real projective plane $\mathbb{R}P^2$. The condition related to collapse is described more precisely by considering the class of spaces $\mathcal{A}^3(D,\varepsilon)$, defined as the class of closed Alexandrov $3$-spaces with $\curv\geq -1$, satisfying that $\diam \leq D$ and $\vol< \varepsilon$ for given $D,\varepsilon>0$. We say that a closed Alexandrov $3$-space $X$ is sufficiently collapsed if $X\in \mathcal{A}^3(D,\varepsilon)$ for a sufficiently small $\varepsilon$ with respect to $D$. Our main result is the following.

\begin{theorem}
\label{thm:Borel_Alex_collapsing}
 For any $D>0$, there exists $\varepsilon=\varepsilon(D)>0$ such that, if $X,Y\in \mathcal{A}^3(D,\varepsilon)$ are aspherical and irreducible, then the Borel Conjecture holds for $X$ and $Y$, that is, if $X$ is homotopy equivalent to $Y$ then $X$ is homeomorphic to $Y$. 
\end{theorem}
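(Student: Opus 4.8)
The plan is to reduce the statement to the three-dimensional Borel conjecture for manifolds---known through Perelman's Geometrization \cite{Por}---by exploiting the rigid structure that sufficient collapse forces on $X$ and $Y$, and then to descend the resulting homeomorphism through the branched manifold cover provided by \cite{GalGui}. First, for fixed $D$ I would choose $\varepsilon=\varepsilon(D)$ small enough that the structure theory of collapsed closed Alexandrov $3$-spaces (due to Mitsuishi and Yamaguchi) applies uniformly to every element of $\mathcal{A}^3(D,\varepsilon)$. This endows each such space with a generalized Seifert fibration or graph decomposition realizing the collapse by (singular) circle fibers over a two-dimensional base. Asphericity discards the spherical and $S^2\times\mathbb{R}$ pieces while irreducibility discards connected-sum decompositions and enforces the prescribed behavior of embedded $2$-spheres and $2$-sided $\mathbb{R}P^2$'s; together they pin $X$ and $Y$ down to aspherical geometric Alexandrov $3$-spaces modeled on the collapsible aspherical geometries $\mathbb{E}^3$, $\mathrm{Nil}$, $\mathrm{Sol}$, $\widetilde{\mathrm{SL}}_2$, or $\mathbb{H}^2\times\mathbb{R}$. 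By \cite{GalGui} each is either a closed aspherical geometric $3$-manifold or a quotient $M/\iota$ of one by an orientation-reversing isometric involution $\iota$ with isolated fixed points, the fixed points accounting for the $K(\mathbb{R}P^2)$ singularities.

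Since $X$ and $Y$ are aspherical, a homotopy equivalence between them amounts to an isomorphism $\varphi\colon\pi_1(X)\to\pi_1(Y)$, and the problem becomes the reconstruction of the homeomorphism type from $\pi_1$ within this geometric class. Writing $X=M_X/\iota_X$, the geometric structure supplied by the collapse realizes $M_X\to X$ as a free $\mathbb{Z}/2$-cover away from the isolated singular points; I would recover from $\pi_1(X)$, together with this structure, the lattice $\Gamma=\pi_1^{\mathrm{orb}}$ fitting in $1\to\pi_1(M_X)\to\Gamma\to\mathbb{Z}/2\to1$, and hence the pair $(M_X,\iota_X)$. Transporting this recognition through $\varphi$ matches $(M_X,\iota_X)$ with $(M_Y,\iota_Y)$ and, in particular, produces a homotopy equivalence $M_X\to M_Y$ intertwining the deck involutions; the three-dimensional Borel conjecture applied to the closed aspherical $3$-manifolds $M_X$ and $M_Y$ then yields a homeomorphism $M_X\cong M_Y$.

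The main obstacle is the final equivariant descent. Ordinary three-dimensional Borel produces a homeomorphism of covers with no a priori compatibility with the deck involutions, so I must upgrade it to a homeomorphism conjugating $\iota_X$ to $\iota_Y$. For this I would invoke the geometrization and rigidity of finite group actions on geometric $3$-manifolds (the fiber-preserving realization of finite actions on Seifert manifolds of Meeks--Scott, and orbifold geometrization of Boileau--Leeb--Porti for the remaining cases), which shows that such an orientation-reversing involution is determined up to conjugacy by its induced automorphism of $\pi_1(M_X)$ and the isotopy class of its isolated fixed-point set---precisely the data matched by $\varphi$. An equivariant homeomorphism then descends to the sought $X\cong Y$ and carries the $\mathbb{R}P^2$-cone points of $X$ to those of $Y$. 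Controlling the torsion in $\Gamma$ that is killed in passing to the topological $\pi_1(X)$, and handling these singular points and the involutions simultaneously with the homotopy equivalence---rather than merely their free quotients---is where I expect the real work to lie.
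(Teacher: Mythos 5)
Your reduction to the three-dimensional Borel conjecture for manifolds is the right instinct, but the route you take through the singular case both contains a genuine gap and misses the key structural fact the paper exploits. The paper's proof of Theorem \ref{thm:Borel_Alex_collapsing} is immediate from Theorem \ref{thm:collapsed_aspherical_implies_manifold}: for $\varepsilon$ small relative to $D$, an irreducible, aspherical space in $\mathcal{A}^3(D,\varepsilon)$ is already homeomorphic to a closed $3$-manifold, so there is no singular case to handle and the manifold Borel conjecture (via geometrization) finishes the argument in one line. That theorem is proved by contradiction, running through the Mitsuishi--Yamaguchi classification of collapsed spaces and showing that every non-manifold possibility fails to be aspherical --- using Mitsuishi's result that a closed non-orientable Alexandrov $n$-space has $\mathbb{Z}_2$-torsion in $H_{n-1}$, Hopf's exact sequence $\pi_2(X)\to H_2(X)\to H_2(B\pi_1(X))\to 0$, the torsion-freeness of fundamental groups of aspherical finite-dimensional $CW$-complexes, and (in the $2$-dimensional-limit case) the classification of involutions on $\mathbb{S}^2\times\mathbb{S}^1$. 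In other words, the case $X=M_X/\iota_X$ with nonempty singular set, which occupies most of your proposal, is vacuous under the hypotheses; no appeal to collapsible geometries, lattices, or equivariant rigidity is needed.

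Independently of that, the equivariant descent you sketch for the singular case is not a proof as written. You would need (i) to recover the extension $1\to\pi_1(M_X)\to\Gamma\to\mathbb{Z}/2\to 1$ from the topological $\pi_1(X)$, which is genuinely problematic because $\pi_1(X)$ is the quotient of $\Gamma$ by the normal closure of the isotropy (the cone $K(\mathbb{R}P^2)$ is simply connected, so van Kampen kills the corresponding $\mathbb{Z}_2$'s), and (ii) to upgrade a homeomorphism $M_X\cong M_Y$ to one conjugating $\iota_X$ to $\iota_Y$. You explicitly defer both points as the place \emph{where the real work lies}, so the argument is incomplete at its crucial step. The correct fix is not to complete the equivariant upgrade but to prove that asphericity together with irreducibility and sufficient collapse already rules out topologically singular points, which is exactly what the paper does.
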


We point out that a related result was obtained in \cite[Theorem 6.1]{Nun} where the second named author proved the Borel Conjecture for closed Alexandrov $3$-spaces admitting an isometric circle action. The proof of Theorem \ref{thm:Borel_Alex_collapsing} is based on two points: the Borel conjecture in the $3$-manifold case and the following result.

\begin{theorem}
\label{thm:collapsed_aspherical_implies_manifold}
For any $D>0$, there exists $\varepsilon=\varepsilon(D)>0$ such that, if $X\in \mathcal{A}^3(D,\varepsilon)$ is irreducible and aspherical, then $X$ is homeomorphic to a $3$-manifold.
\end{theorem}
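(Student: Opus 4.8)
The plan is to combine the topological classification of sufficiently collapsed Alexandrov $3$-spaces with the strong homotopy-theoretic constraints forced by asphericity, reducing the statement to the complete absence of topologically singular points. Recall that a closed Alexandrov $3$-space is a topological manifold precisely away from its finite set of topologically singular points, each of which has a neighborhood homeomorphic to the cone $K(\RP^2)$ with link $\RP^2$; equivalently $X=M/\iota$ for a closed smooth $3$-manifold $M$ and an orientation-reversing involution $\iota$ whose isolated fixed points map onto the singular set. Thus it suffices to prove that, for $\varepsilon$ small enough relative to $D$, an irreducible aspherical $X\in\mathcal{A}^3(D,\varepsilon)$ has no such points. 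First I would invoke the classification of collapsed Alexandrov $3$-spaces (Mitsuishi--Yamaguchi, in the irreducible form recorded in \cite{GalGuiNun}): for sufficiently small $\varepsilon$, $X$ admits an explicit decomposition into standard pieces glued along tori and Klein bottles, organized by the dimension of the collapse, and the topologically singular points are confined to a short, explicit list of non-manifold pieces. This turns the problem into checking finitely many model configurations.

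The two obstructions I would extract from asphericity are the following. Since $X$ is a closed aspherical space of dimension $3$, its universal cover is contractible and at most $3$-dimensional, so $\pi_1(X)$ has finite cohomological dimension and is therefore \emph{torsion-free} and infinite; in particular $X$ cannot collapse to a point, as that case yields spaces with finite or almost-nilpotent fundamental group. Second, asphericity gives $\pi_2(X)=0$, equivalently $H_2(\widetilde X)=0$ by Hurewicz. I would then eliminate the singular pieces by showing that each one forces a violation of one of these two properties: either it produces an order-two element of $\pi_1(X)$ (coming from the $\RP^2$-links, via the involution $\iota$ and a Smith-theoretic comparison of $M$, its fixed set, and $X$), contradicting torsion-freeness, or it produces a class in $\pi_2(X)$ detected on the branched double cover, contradicting $\pi_2(X)=0$. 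The model case is the suspension $\Susp(\RP^2)$ and its relatives: being simply connected but not contractible, they have $\pi_2\neq 0$ and so are never aspherical.

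Here irreducibility plays a double role. On one hand it is used within the classification to cut the list of possible pieces down to the short list above, since reducing spheres and non-bounding $\RP^2$'s are excluded by hypothesis; on the other hand, when a singular $K(\RP^2)$-cap appears, the condition that every $2$-sided $\RP^2$ bound a $K(\RP^2)$ controls how the cap is attached and lets me compare $X$ with its branched double cover $M$ cleanly. Assembling these, every configuration in the list that contains a singular point is shown to be non-aspherical, so asphericity forces the singular set to be empty and $X$ to be a manifold; the value of $\varepsilon(D)$ is exactly the one furnished by the collapsing classification.

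I expect the main obstacle to be the two-dimensional-collapse case, in which $X$ is a generalized Seifert fibered space over a $2$-orbifold and the singular points arise from degenerate ($\RP^2$-type) fibers over bad orbifold points or the mirror boundary. Translating this local fiber data into precise statements about $\pi_1(X)$ and $\pi_2(X)$, and verifying exhaustively that \emph{every} such degeneration produces either torsion in $\pi_1$ or a nonzero class in $\pi_2$, is where the argument is most delicate, not least because the gluing Klein bottles already carry subtle (torsion-free but nonabelian) fundamental groups. I emphasize that the soft homotopy theory alone does not suffice: a contractible Alexandrov $3$-space with $K(\RP^2)$ singularities is not excluded a priori, so the geometric input of the collapsing classification is genuinely needed to close the argument.
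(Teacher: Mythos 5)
Your overall strategy coincides with the paper's: take the $\varepsilon(D)$ furnished by the Mitsuishi--Yamaguchi collapsing classification, observe that a non-manifold $X$ must contain one of a short list of singular pieces, and rule each configuration out using the two asphericity obstructions, torsion-freeness of $\pi_1$ and vanishing of $\pi_2$. The paper argues by contradiction along a collapsing sequence, splits into $\dim Y=0,1,2$, handles the $2$-dimensional case through the local circle action classification and the classification of involutions on $\mathbb{S}^2\times\mathbb{S}^1$ (reducing to $\Susp(\mathbb{R}P^2)$ and $\Susp(\mathbb{R}P^2)\#\Susp(\mathbb{R}P^2)$), and handles the $1$-dimensional case by a piece-by-piece Van Kampen computation; in that respect your outline is faithful.

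There is, however, a genuine gap at the decisive step: how to actually produce a nonzero class in $\pi_2(X)$ for the singular configurations whose fundamental groups are torsion-free and infinite. In the $1$-dimensional collapse case the gluings $\bsfour\cup(S^1\times \mathrm{Mo})$, $\bsfour\cup(S^1\times D^2)$, $\bsfour\cup(K^2\tilde{\times}I)$ and the analogous $\bpt$-gluings have $\pi_1\cong\mathbb{Z}\oplus\mathbb{Z}$, $\mathbb{Z}$ or $\mathbb{Z}\rtimes\mathbb{Z}$, so no Smith-theoretic order-two element exists to be found, and your fallback --- a $\pi_2$-class ``detected on the branched double cover'' --- does not go through as stated: the branched double cover is not a covering space, so neither $\pi_2(X)$ nor $H_2$ of the universal cover of $X$ is computed from it. The paper's actual mechanism is different and essential: since $X$ has topologically singular points it is non-orientable, so by Mitsuishi's theorem the torsion subgroup of $H_2(X;\mathbb{Z})$ is $\mathbb{Z}_2$; feeding this into Hopf's exact sequence $\pi_2(X)\to H_2(X)\to H_2(B\pi_1(X);\mathbb{Z})\to 0$, and using that $H_2(B\pi_1(X))$ is $0$ or $\mathbb{Z}$ (hence torsion-free) for the groups above, forces $\pi_2(X)\neq 0$. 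Without this input, or an equivalent one, your case analysis cannot be closed. Two smaller inaccuracies: the $0$-dimensional limit case is not exhausted by spaces with ``finite or almost-nilpotent fundamental group'' (it also contains generalized Seifert fibered spaces and all $1$-dimensional-limit configurations, which must be recycled through the other cases), and the genuinely delicate casework is the $1$-dimensional limit, not the $2$-dimensional one, which the paper dispatches using the earlier results of Galaz-Garc\'ia--N\'u\~nez-Zimbr\'on and Galaz-Garc\'ia--Guijarro--N\'u\~nez-Zimbr\'on.
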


The classification of closed collapsing Alexandrov $3$-spaces due to Mitsuishi-Yamaguchi is a key tool in the proof of Theorem \ref{thm:collapsed_aspherical_implies_manifold}. The classification of closed Alexandrov $3$-spaces admitting isometric (local) circle actions \cite{GalNun}, \cite{Nun} obtained by Galaz-Garc\'ia and the second named author also plays a role. The strategy of proof resembles that of \cite[Theorem A]{GalGuiNun}. In fact, without assuming that the spaces in question are sufficiently collapsed or irreducible, the analysis of Section \ref{ssec:1-dim-limit} implies the following result.

\begin{corollary}
Let $X$ be a closed, non-orientable Alexandrov $3$-space with fundamental group $\mathbb{Z}$, $\mathbb{Z}\rtimes \mathbb{Z}$ or $\mathbb{Z}\oplus \mathbb{Z}$. Then $\pi_2(X)\neq 0$. In particular, $X$ is not aspherical. 
\end{corollary}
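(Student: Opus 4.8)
The plan is to first show, by a mod-$2$ count, that $X$ cannot be aspherical, and then to promote this to the assertion $\pi_2(X)\neq 0$ by analysing the universal cover. The structural input I would invoke is the topological classification recalled in the introduction (see \cite{GalGui}): a closed Alexandrov $3$-space is either a closed topological $3$-manifold or the quotient $M/\iota$ of a closed orientable $3$-manifold by an orientation-reversing involution $\iota$ with isolated fixed points. In either case $X$ is a closed connected $3$-dimensional pseudomanifold whose non-manifold points possess neighborhoods homeomorphic to the open cone over $\mathbb{R}P^2$; in particular $X$ carries a mod-$2$ fundamental class and $H_3(X;\mathbb{Z}/2)=\mathbb{Z}/2$.

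First I would rule out asphericity. Each group occurring in the statement is the fundamental group of a closed aspherical surface or of the circle, namely $\mathbb{Z}=\pi_1(S^1)$, $\mathbb{Z}\oplus\mathbb{Z}=\pi_1(T^2)$, and $\mathbb{Z}\rtimes\mathbb{Z}=\pi_1$ of the Klein bottle; hence each admits a classifying space of dimension at most $2$, so $H_3(\pi_1(X);\mathbb{Z}/2)=0$. Were $X$ aspherical it would be a model for $K(\pi_1(X),1)$, giving $H_3(X;\mathbb{Z}/2)=H_3(\pi_1(X);\mathbb{Z}/2)=0$ and contradicting $H_3(X;\mathbb{Z}/2)=\mathbb{Z}/2$. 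Thus $X$ is not aspherical, i.e. its universal cover $\widetilde X$ is not contractible.

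To reach the sharper statement $\pi_2(X)\neq 0$, I would pass to the universal covering $p\colon\widetilde X\to X$. Because the cone over $\mathbb{R}P^2$ is contractible, the cone neighborhoods of the singular points are simply connected and lift homeomorphically; therefore $\widetilde X$ is again a $3$-dimensional pseudomanifold (with the preimages of the singular points as its only cone points), it is simply connected, and it is non-compact since $\pi_1(X)$ is infinite. Suppose now that $\pi_2(X)=0$. Then $H_2(\widetilde X)\cong\pi_2(\widetilde X)=\pi_2(X)=0$ by the Hurewicz theorem, while $H_1(\widetilde X)=0$ as $\widetilde X$ is simply connected. A connected non-compact $3$-pseudomanifold has vanishing top homology, so $H_3(\widetilde X)=0$, and $H_n(\widetilde X)=0$ for $n>3$ for dimensional reasons. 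Hence $\widetilde X$ is simply connected with trivial reduced homology; since Alexandrov spaces are finite-dimensional and locally contractible, and so have the homotopy type of CW complexes, Whitehead's theorem forces $\widetilde X$ to be contractible, contradicting the previous paragraph. Therefore $\pi_2(X)\neq 0$, and a fortiori $X$ is not aspherical.

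The step I expect to be the crux is the control of $\widetilde X$ along the singular set: one must check that the cone-over-$\mathbb{R}P^2$ points lift to cone points of the same type, so that $\widetilde X$ remains a non-compact $3$-pseudomanifold with vanishing top homology, and that $\widetilde X$ has the homotopy type of a CW complex so that Hurewicz and Whitehead may be applied; the remaining inputs—the mod-$2$ fundamental class of a closed $3$-pseudomanifold and the vanishing $H_3(\pi_1(X);\mathbb{Z}/2)=0$—are routine. Finally, I note that non-orientability is what makes the statement substantive, for an orientable closed Alexandrov $3$-space is an orientable $3$-manifold, and among those $\mathbb{Z}\oplus\mathbb{Z}$ and $\mathbb{Z}\rtimes\mathbb{Z}$ are not realized as fundamental groups while $\mathbb{Z}$ forces $X\cong S^2\times S^1$.
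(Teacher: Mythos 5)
Your argument is correct, but it takes a genuinely different route from the paper's. The paper obtains this corollary directly from the computations in its Section 3.2: Mitsuishi's theorem gives that for a closed non-orientable Alexandrov $3$-space the torsion subgroup of $H_2(X;\mathbb{Z})$ is $\mathbb{Z}_2$, and feeding this into Hopf's exact sequence $\pi_2(X)\to H_2(X)\to H_2(B\pi_1(X);\mathbb{Z})\to 0$ settles all three cases at once, since $H_2(B\pi_1(X);\mathbb{Z})$ is $0$ for $\pi_1(X)\cong\mathbb{Z}$ or $\mathbb{Z}\rtimes\mathbb{Z}$ (so $\pi_2(X)$ surjects onto the nonzero group $H_2(X)$), and is $\mathbb{Z}$ for $\pi_1(X)\cong\mathbb{Z}\oplus\mathbb{Z}$ (so $\pi_2(X)=0$ would force $H_2(X)\cong\mathbb{Z}$ to be torsion-free, contradicting Mitsuishi). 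You instead first exclude asphericity by playing the mod-$2$ fundamental class of a closed $3$-pseudomanifold against the cohomological dimension $\leq 2$ of the three groups, and then upgrade to $\pi_2(X)\neq 0$ by showing that $\pi_2(X)=0$ would make the simply connected, non-compact universal cover acyclic, hence contractible via Hurewicz and Whitehead. Both arguments are sound. Yours buys generality: it never uses non-orientability, so it in fact shows that \emph{no} closed Alexandrov $3$-space with one of these fundamental groups can have vanishing $\pi_2$. The price is exactly the structural bookkeeping you flag — that the cone neighborhoods lift homeomorphically so the universal cover is again a locally finite, strongly connected $3$-pseudomanifold with vanishing top homology, and that it has the homotopy type of a CW complex (which holds since Alexandrov spaces are finite-dimensional, locally contractible separable metric spaces, hence ANRs). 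The paper's version is essentially a two-line consequence of the two quoted theorems, but it leans irreducibly on Mitsuishi's non-orientability result.
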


For arbitrary dimension we observe the following result, which is an immediate consequence of a result of Mitsuishi (see \cite[Corollary 5.7]{Mit} and Theorem \ref{thm:Mitsuishi_result} below) and Theorem \ref{thm:Hopf} (stated below).

\begin{corollary}
\label{thm:compact_univ_cover_aspherical_implies_manifold} 
Let $X$ be a closed, aspherical Alexandrov $n$-space such that its universal cover is compact. Then $X$ is homeomorphic to a closed $n$-manifold. 
\end{corollary}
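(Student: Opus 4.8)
The plan is to reduce the statement to the simply connected case and then invoke the two cited results. First I would examine the universal covering $p\colon \tilde X\to X$. Its deck group is $G:=\pi_1(X)$, and the fibre $p^{-1}(x)$ is a discrete closed subset of the \emph{compact} space $\tilde X$, hence finite. Therefore the covering has finitely many sheets and $G$ is a \emph{finite} group acting freely on $\tilde X$ by deck transformations. This is the only place where compactness of the universal cover is used, but it is the decisive input.

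Next I would exploit asphericity. Since $X$ is aspherical, $\tilde X$ is weakly contractible; because Alexandrov spaces are finite-dimensional absolute neighborhood retracts, $\tilde X$ is in fact contractible. Hence $X=\tilde X/G$ is a finite-dimensional $K(G,1)$ with $\dim X=n$. At this point I would apply the Hopf-type result, Theorem \ref{thm:Hopf}: a finite-dimensional aspherical space has torsion-free fundamental group, equivalently a nontrivial finite group cannot act freely on a finite-dimensional contractible space, its cohomological dimension being infinite. Since $G$ is simultaneously finite and torsion-free, $G$ is trivial, and so $X=\tilde X$ is simply connected and contractible.

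It then remains to recognize a compact, simply connected Alexandrov $n$-space of this homotopy type as a manifold, which is exactly where Mitsuishi's Corollary 5.7 (Theorem \ref{thm:Mitsuishi_result}) enters: combined with the existence of a mod-$2$ fundamental class for a closed Alexandrov space, it constrains the local homology of $X$ at every point and yields that $X$ is homeomorphic to a closed topological $n$-manifold. Once $G$ has been eliminated, this last step is purely a matter of quoting the two theorems, which is why the authors describe the conclusion as an immediate consequence of Theorems \ref{thm:Mitsuishi_result} and \ref{thm:Hopf}.

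I expect the main obstacle to be the two regularity passages that make the formal argument legitimate in the singular setting: upgrading weak contractibility of $\tilde X$ to genuine contractibility, and justifying the cohomological-dimension obstruction for a possibly non-manifold space. Concretely, one must know that $\tilde X$ is an ANR of covering dimension $n$, so that $\mathrm{cd}(G)\le n<\infty$ and the Hopf obstruction of Theorem \ref{thm:Hopf} applies, and that the fundamental class supplied by Mitsuishi's theorem behaves as in the manifold case. These facts about Alexandrov spaces (local contractibility, finite covering dimension, and the existence of fundamental classes) are precisely what reduce the corollary to a short deduction.
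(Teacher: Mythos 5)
Your first three steps are correct and are essentially the reduction the paper has in mind: compactness of $\tilde X$ forces $\pi_1(X)$ to be finite, asphericity plus the ANR/finite-dimensionality of Alexandrov spaces makes $\tilde X$ contractible, and a finite-dimensional $K(G,1)$ has torsion-free fundamental group, so $G$ is trivial and $X=\tilde X$ is a closed, contractible Alexandrov $n$-space. (A small citation slip: the torsion-freeness statement is Theorem \ref{thm:aspherical_fund_group_no_torsion}, L\"uck's lemma, not Theorem \ref{thm:Hopf}; the latter is only the exact sequence $\pi_2\to H_2\to H_2(B\pi_1)\to 0$ and does not by itself give the cohomological-dimension obstruction you invoke.)

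The genuine gap is in your last step. Mitsuishi's Corollary 5.7 (restated in part as Theorem \ref{thm:Mitsuishi_result}) is a statement about \emph{global} homology --- it produces a fundamental class, i.e.\ $H_n(X;\mathbb{Z})\cong\mathbb{Z}$ in the orientable case and $2$-torsion in $H_{n-1}(X;\mathbb{Z})$ in the non-orientable case. It is not a manifold-recognition criterion, and it says nothing about local homology at individual points; no such deduction "contractible closed Alexandrov $n$-space $\Rightarrow$ manifold" is available, nor could it be: a closed $n$-manifold with $n\geq 1$ is never contractible, so your asserted conclusion is actually inconsistent with what you established in steps 1--3. The correct way to finish is the opposite of a recognition argument: once $X$ is contractible, $H_n(X;\mathbb{Z})=0$ and $H_{n-1}(X;\mathbb{Z})$ is torsion-free, which contradicts Mitsuishi's result in either orientation case whenever $n\geq 1$. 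Hence for $n\geq 1$ no space satisfies the hypotheses and the corollary holds vacuously, while for $n=0$ the space is a point. This is the content of the paper's one-line derivation from Theorems \ref{thm:Mitsuishi_result} and \ref{thm:Hopf}; your write-up reaches the stated conclusion only by asserting a recognition theorem that does not exist.
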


In light of Theorem \ref{thm:collapsed_aspherical_implies_manifold} and Corollary  \ref{thm:compact_univ_cover_aspherical_implies_manifold} we propose the following natural conjecture (cf. Remark \ref{conjecture}).

\begin{conjecture}\label{conj:alex-manifold}
Every closed, aspherical Alexandrov $3$-space is a $3$-manifold
\end{conjecture}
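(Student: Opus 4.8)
The plan is to reduce the conjecture to a statement about the topologically singular set and then to attack that statement with three-dimensional (equivariant) topology. By the structure theorem for closed Alexandrov $3$-spaces \cite{GalGui}, such an $X$ is either a topological $3$-manifold or else its topologically singular set $S$ is a nonempty finite set of points, each admitting a neighborhood homeomorphic to the open cone $K(\RP^2)=\Real^3/\{\pm 1\}$. Thus it suffices to prove that if $X$ is closed and aspherical, then $S=\emptyset$. I would phrase this through the canonical branched double cover $\pi\colon M\to X$, where $M$ is a smooth closed orientable $3$-manifold and $\iota\colon M\to M$ is an orientation-reversing smooth involution whose fixed-point set $F=\pi^{-1}(S)$ is finite, with $\iota$ acting near each fixed point as $-\mathrm{id}$ on $\Real^3$. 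In these terms the goal becomes: asphericity of the quotient underlying space $X=M/\iota$ forces $\iota$ to be free.

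First I would dispatch the collapsed regime for free. If $X$ happens to lie in $\mathcal{A}^3(D,\varepsilon)$ for $\varepsilon$ small, then asphericity ought to supply irreducibility through a sphere-theorem argument (an embedded $2$-sphere bounds because $\pi_2(X)=0$), so that Theorem \ref{thm:collapsed_aspherical_implies_manifold} applies and $X$ is a manifold. The real content is therefore the non-collapsed (``thick'') case, where I would bring in geometry: I would use Perelman's geometrization \cite{Por} for $M$, together with the equivariant geometrization and orbifold theorem, to put $\iota$ into geometric position and classify the possible involutions with isolated fixed points. Two quantitative constraints guide the analysis. A Lefschetz computation shows each fixed point of $\iota$ has index $+1$ (as $\iota$ is locally $-\mathrm{id}$), so $|F|=L(\iota)=\sum_i(-1)^i\operatorname{tr}(\iota^\ast\mid H_i(M;\mathbb{Q}))$; and a cut-and-paste Euler-characteristic count gives $\chi(X)=|S|/2$, forcing $|S|$ to be even and positive whenever $S\ne\emptyset$. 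These identities trade the existence of a singular point for rigid homological data on $(M,\iota)$, which I would confront with the list of aspherical geometries.

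The main obstacle, and the reason this remains a conjecture, is that there is no purely local homotopy-theoretic obstruction to asphericity at a $K(\RP^2)$ point: the cone is contractible, and the tautological test sphere $S^2\to\RP^2\hookrightarrow X$ bounds inside the cone, so it dies in $\pi_2(X)$. Any obstruction must therefore be global, coming from the interaction of the branched cover $\pi$, Smith theory for $\iota$, and the geometry of $M$. The delicate case is an orientation-reversing involution with isolated fixed points on an aspherical rational homology sphere --- for instance a central inversion of a hyperbolic homology $3$-sphere, which by the Lefschetz count has exactly two fixed points --- where one must decide whether the resulting two-singular-point quotient can be aspherical. I expect the crux to be proving that it never is: establishing, via the lifted structure $\widetilde{X}\setminus\widetilde{S}$ (whose ends are $\RP^2\times\Real$ and whose fundamental group is normally generated by the resulting $2$-torsion loops) together with $\mathbb{Z}/2$-Smith theory, that a contractible universal cover is incompatible with the presence of these antipodal cone points. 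Once this global incompatibility is secured in the thick case, combining it with Theorem \ref{thm:collapsed_aspherical_implies_manifold} in the collapsed case would yield the conjecture.
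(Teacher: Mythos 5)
The statement you were given is stated in the paper as Conjecture~\ref{conj:alex-manifold}, i.e.\ as an open problem: the authors do not prove it. They only establish special cases, namely Theorem~\ref{thm:collapsed_aspherical_implies_manifold} (sufficiently collapsed, irreducible, aspherical spaces), Corollary~\ref{thm:compact_univ_cover_aspherical_implies_manifold} (aspherical spaces with compact universal cover), and the simply connected case noted in Remark~\ref{conjecture}. So there is no proof in the paper to compare against, and a complete argument from you would have to be new mathematics.

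Your proposal is not such an argument, and to your credit you say so. The reduction you set up is correct and matches the paper's framing: a closed Alexandrov $3$-space is a manifold if and only if its topologically singular set is empty, and when it is nonempty $X$ is the quotient of its orientable branched double cover by an orientation-reversing involution with isolated fixed points, so the task is to show asphericity forces the involution to be free. Your Lefschetz and Euler-characteristic identities are fine, and your observation that there is no \emph{local} homotopy-theoretic obstruction at a cone point correctly identifies why the problem is hard. But the decisive step --- that a contractible universal cover is incompatible with the presence of cone points in the thick, geometrized case --- is exactly what you leave as something you ``expect'' Smith theory and equivariant geometrization to deliver. That is precisely the open content of the conjecture, so the proposal contains a genuine, acknowledged gap and is not a proof. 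A secondary soft spot: your claim that asphericity supplies irreducibility (needed to invoke Theorem~\ref{thm:collapsed_aspherical_implies_manifold} in the collapsed regime) leans on a sphere-theorem-type argument that is established for $3$-manifolds but not for Alexandrov $3$-spaces with singular points, and the definition of irreducibility used in the paper additionally requires every $2$-sided $\mathbb{R}P^2$ to bound a $K(\mathbb{R}P^2)$, which does not follow from $\pi_2(X)=0$ alone (torsion-freeness of the fundamental group of an aspherical complex rules out $\pi_1$-injective $2$-sided projective planes, but not the non-injective ones).
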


The organization of the article is the following. In Section \ref{section:preliminaries}, we briefly recall the basic structure of Alexandrov $3$-spaces following Galaz-Garc\'ia-Guijarro \cite{GalGui} and the classification of collapsing Alexandrov $3$-spaces of Mitsuishi-Yamaguchi \cite{MitYam}. In Section \ref{section:borel_conjecture_collapsing_alexandrov_3_spaces}, we prove Theorem \ref{thm:collapsed_aspherical_implies_manifold} which yields as a consequence the validity of Theorem \ref{thm:Borel_Alex_collapsing}. 
In section \ref{section:groups_spaces_questions}, we  state  some questions  related  to the  fundamental  groups  and groups  which  can  act on Alexandrov  $3$-spaces,  in  analogy  with  similar results  obtained   in  connection  with the  topological  rigidity  of  aspherical  manifolds.  


\begin{ack} The authors warmly thank Fernando Galaz-Garc\'ia and Luis Guijarro for very helpful conversations. JNZ also thanks Bernardo Villarreal and \'Angel Zaldivar for useful communications.  
\end{ack}

\section{Preliminaries}
\label{section:preliminaries}

We will assume that the reader is familiar with the general theory of Alexandrov spaces of curvature bounded below and refer to \cite{BurBurIva} for a more detailed introduction. In this section we will briefly recall some results concerning the structure of closed Alexandrov $3$-spaces. All spaces are considered to be connected throughout the article. 

\subsection{Alexandrov $3$-spaces}
 Let $X$ denote a closed Alexandrov $3$-space and for each $x\in X$, let $\Sigma_xX$ be the space of directions at $x$. The space $\Sigma_xX$ is a closed Alexandrov $2$-space with $\curv \Sigma_xX\geq 1$ (see \cite[Theorem 10.8.6]{BurBurIva}). This implies, via the Bonnet-Myers Theorem (see \cite[Theorem 10.4.1]{BurBurIva}) and the classification of closed surfaces that the homeomorphism type of $\Sigma_xX$ is that of a sphere $\mathbb{S}^2$ or that of a real projective plane $\mathbb{R}P^2$. A point $x\in X$ such that $\Sigma_xX$ is homeomorphic to $\mathbb{S}^2$ is called \textit{topologically regular}, while a point such that $\Sigma_xX$ is homeomorphic to $\mathbb{R}P^2$ is called \textit{topologically singular}. We let $S(X)$ be the subset of $X$ consisting of topologically singular points. Then $X\setminus S(X)$ is open and dense in $X$ (see \cite[Theorem 10.8.5]{BurBurIva}). Furthermore, the Conical Neighborhood Theorem of Perelman \cite{Per} states that each point $x\in X$ has a neighborhood which is pointed-homeomorphic to the cone over $\Sigma_xX$. As a consequence, $S(X)$ is a finite set. 

Topologically, a closed Alexandrov $3$-space $X$ can be described as a compact $3$-manifold $M$ having a finite number of $\mathbb{R}P^2$-boundary components with a cone over $\mathbb{R}P^2$ attached on each boundary component. In the case that $S(X)\neq \emptyset$ there is an alternative topological description of $X$ as  quotient of a closed, orientable, topological $3$-manifold $\tilde{X}$ by an orientation-reversing involution $\iota:\tilde{X}\to\tilde{X}$ having only isolated fixed points. The $3$-manifold $\tilde{X}$ is called the \textit{branched orientable double cover of $X$} (see \cite[Lemma 1.7]{GalGui}). It is possible to lift the Alexandrov metric on $X$ to an Alexandrov metric on $\tilde{X}$ having the same lower curvature bound in such a way that $\iota$ is an isometry. In particular, $\iota$ is equivalent to a smooth involution on $\tilde{X}$ regarded as a smooth $3$-manifold (a detailed description of this construction can be found in \cite[Lemma 1.8]{GalGui}, \cite
[Section 2.2]{DenGalGuiMunn} and \cite[Section 5]{GroWil}).

\subsection{Collapsing Aleandrov $3$-spaces} 

Let $\{X_i\}_{i=1}^{\infty}$ be a sequence of closed Alexandrov $3$-spaces with diameters uniformly bounded above by $D>0$ and $\curv X_i\geq k$ for some $k\in\mathbb{R}$. Gromov's Precompactness Theorem implies that (possibly after passing to a subsequence), there exists an Alexandrov space $Y$ with diameter bounded above by $ D$ and $\curv Y \geq k$ such that $X_i \stackrel {_{GH}}{\longrightarrow}Y$. In the case in which $\dim Y<3$, the sequence $X_i$ is said to \textit{collapse to $Y$}. Similarly, a closed Alexandrov $3$-space $X$ is a \textit{collapsing Alexandrov $3$-space} if there exists a sequence of Alexandrov metrics $d_i$ on $X$ such that the sequence $\{(X,d_i)\}_{i=1}^{\infty}$ is a collapsing sequence. 

The topological classification of closed collapsing Alexandrov $3$-spaces was obtained by Mitsuishi-Yamaguchi in \cite{MitYam}. We now give a brief summary of the classification. We denote the boundary of an Alexandrov space $Y$ by $\partial Y$.

In the case in which $\dim Y=2$ (cf. \cite[Theorems 1.3, 1.5]{MitYam}), for sufficiently big $i$, $X_i$ is homeomorphic to a \textit{generalized Seifert fibered space} $\mathrm{Seif}(Y)$ (see \cite[Definition 2.48]{MitYam}). In the case in which $\partial Y\neq \emptyset$, $\mathrm{Seif}(Y)$ is attached with a finite number of \textit{generalized solid tori and Klein bottles} (see  \cite[Definition 1.4]{MitYam}). 

In the event that $\dim Y=1$ and $\partial Y=\emptyset$ (cf. \cite[Theorem 1.7]{MitYam}), for big enough $i$,  $X_i$ is homeomorphic to an $F$-fiber bundle over $\mathbb{S}^1$, where $F$ is homeomorphic to one of the spaces $T^2$, $K^2$, $\mathbb{S}^2$ or $\mathbb{R}P^2$. On the other hand, if $\partial Y\neq \emptyset$ (cf. \cite[Theorem 1.8]{MitYam}), $X_i$ is homeomorphic to a union of two spaces $B$ and $B'$ with one boundary component, glued along their homeomorphic boundaries, where $\partial B$ is one of the spaces $T^2$, $K^2$, $\mathbb{S}^2$ or $\mathbb{R}P^2$. The pieces $B$ and $B'$ are determined as follows:
\begin{itemize}
\item[(i)] If $\partial B\cong \mathbb{S}^2$ then $B$ and $B'$ are homeomorphic to one of: a $3$-ball $D^3$, a $3$-dimensional projective space with the interior of a $3$-ball removed $\mathbb{R}P^3\setminus \mathrm{int}D^3$ or $\bstwo$, a space homeomorphic to a small metric ball of an $\mathbb{S}^2$-soul of an open non-negatively curved Alexandrov space $L(\mathbb{S}^2;2)$ (cf. \cite[Corollary 2.56]{MitYam}).
\item[(ii)] If $\partial B\cong \mathbb{R}P^2$ then $B$ and $B'$ are homeomorphic to a closed cone over a projective plane $K_1(\mathbb{R}P^2)$.
\item[(iii)] If $\partial B\cong T^2$ then $B$ and $B'$ are homeomorphic to one of $\mathbb{S}^1\times D^2$, $\mathbb{S}^1\times \mathrm{Mo}$, the orientable non-trivial $I$-bundle over $K^2$, $K^2\tilde{\times}I$ or $\bsfour$, a space homeomorphic to a small metric ball of an $\mathbb{S}^2$-soul of an open non-negatively curved Alexandrov space $L(\mathbb{S}^2;4)$ (cf. \cite[Corollary 2.56]{MitYam}).
\item[(iv)] If $\partial B\cong K^2$ then $B$ and $B'$ are homeomorphic to one of $\mathbb{S}^1\tilde{\times} D^2$ the non-orientable $D^2$-bundle over $\mathbb{S}^1$, $K^2\hat{\times}I$ the non-orientable non-trivial $I$-bundle over $K^2$, the space $\bpt$ defined in \cite[Example 1.2]{MitYam}, or $\bptwo$, a space homeomorphic to a small metric ball of an $\mathbb{R}P^2$-soul of an open non-negatively curved Alexandrov space $L(\mathbb{R}P^2;2)$ (cf. \cite[Corollary 2.56]{MitYam}).
\end{itemize}

Finally, if $\dim Y=0$ (cf. \cite[Theorem 1.9]{MitYam}), then for $i$ sufficiently big, $X_i$ is homeomorphic to either a generalized Seifert fiber space $\mathrm{Seif}(Z)$, (where $Z$ is a $2$-dimensional Alexandrov space with $\curv Z\geq 0$), a space appearing in the cases in which $\dim Y=1, 2$ or a non-negatively curved Alexandrov space with finite fundamental group. 

In order to provide information on the homotopy groups of some of the spaces appearing in the previous classification we now recall the celebrated Soul Theorem for Alexandrov spaces due to Perelman \cite[\S 6]{Per2}

\begin{thm}[Soul Theorem]
Let $X$ be a compact Alexandrov space of $\curv\geq 0$ with $\partial X\neq \emptyset$. Then there exists a totally convex, compact subset $S\subset X$, called the soul of $X$ with $\partial S=\emptyset$ which is a strong deformation retract of $X$.
\end{thm}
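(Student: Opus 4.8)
The plan is to construct the soul as the maximum set of the distance-to-boundary function and to realize the retraction through the gradient flow of that function. First I would set $f\colon X\to\Real$, $f(x)=\dist(x,\partial X)$, and record the elementary facts that $f$ is continuous, nonnegative, vanishes exactly on $\partial X$, and attains a positive maximum because $X$ is compact with $\partial X\neq\emptyset$. The candidate soul will be the set where this maximum is achieved, and everything will hinge on the concavity of $f$.

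The first key step is to prove that $f$ is concave along geodesics when $\curv X\geq 0$. In the Riemannian-with-boundary setting this is the classical Cheeger--Gromoll phenomenon; in the Alexandrov setting it rests on the fact that $\partial X$ is an extremal subset together with Toponogov-type comparison for the first and second variation of arclength. Concretely, for a unit-speed geodesic $\gamma$ and a minimizing segment from $\gamma(t)$ to its foot point on $\partial X$, comparison forces $f\circ\gamma$ to be concave in the barrier sense, i.e. $(f\circ\gamma)''\leq 0$. Granting concavity, every superlevel set $\{f\geq c\}$ is totally convex, and in particular the maximum set
\[
S=\{x\in X : f(x)=\textstyle\max_X f\}
\]
is a nonempty, compact, totally convex subset of $X$, which I take as the soul.

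Next I would verify that $S$, with its induced intrinsic metric, is an Alexandrov space of $\curv\geq 0$ with $\partial S=\emptyset$, and then produce the retraction. The vanishing of the boundary is where maximality is essential: at a hypothetical boundary point of $S$ there would be an admissible direction along which $f$ could strictly increase, contradicting that $S$ is the top level of a concave function; total convexity then guarantees that the space of directions of $S$ at each point is a full sphere. For the strong deformation retraction I would invoke the gradient flow of the semiconcave function $f$ in the sense of Perelman--Petrunin: the $f$-gradient flow $\Phi_t$ is $1$-Lipschitz, nondecreasing along $f$, fixes $S$ pointwise, and carries every point into $S$; after a suitable reparametrization this yields a continuous homotopy $H\colon X\times[0,1]\to X$ with $H_0=\mathrm{id}_X$, $H_1(X)=S$ and $H_t|_S=\mathrm{id}_S$, so that $S$ is a strong deformation retract of $X$.

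The main obstacle I expect is the concavity of $f$ in the singular setting: unlike the smooth case one cannot differentiate freely, and the argument must proceed through comparison inequalities and the extremality of $\partial X$. The second technical pillar, the construction and basic properties of the gradient flow of a semiconcave function on an Alexandrov space, is itself genuinely nontrivial and is precisely the machinery of Perelman and Petrunin that makes the final retraction step legitimate.
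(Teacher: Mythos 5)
This statement is quoted in the paper as a known result of Perelman (cited to \cite[\S 6]{Per2}); the paper gives no proof of its own, so your sketch can only be judged against the standard construction. Your overall strategy --- concavity of $f=\dist(\cdot,\partial X)$ on a nonnegatively curved Alexandrov space, total convexity of superlevel sets, and a Sharafutdinov/gradient-flow retraction --- is indeed the right skeleton, and you correctly isolate the two hard technical inputs (concavity of $f$ in the singular setting, and Perelman--Petrunin gradient flows).

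There is, however, a genuine gap at the step where you claim $\partial S=\emptyset$ for $S=\{f=\max_X f\}$. This is false in general, and your heuristic (``at a boundary point of $S$ there would be an admissible direction along which $f$ could strictly increase'') does not work: $f$ attains its maximum on all of $S$, so no direction increases $f$ anywhere on $S$; the boundary of $S$ is a statement about the intrinsic structure of $S$ as a lower-dimensional convex subset, not about first variation of $f$. A concrete counterexample is a flat stadium domain in $\Real^2$ (a rectangle with two半 semicircular caps, which is a compact nonnegatively curved Alexandrov $2$-space with boundary): the maximum set of the distance to the boundary is a closed segment, which as a $1$-dimensional Alexandrov space has nonempty boundary, namely its two endpoints. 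The correct construction, exactly as in Cheeger--Gromoll, must therefore be iterated: set $C_1=\{f=\max f\}$; if $\partial C_1\neq\emptyset$, pass to the maximum set $C_2$ of $\dist(\cdot,\partial C_1)$ inside $C_1$, and so on. Each step produces a compact totally convex subset of strictly smaller dimension, so the process terminates in finitely many steps at a totally convex compact set $S$ with $\partial S=\emptyset$, which is the soul. Relatedly, your retraction step is stated too casually: the gradient flow of a concave function need not reach its maximum set in finite time, and the strong deformation retraction onto the soul is built from Sharafutdinov-type retractions through the nested superlevel sets and through each stage of the iteration, then concatenated. With the iteration restored and the retraction assembled stage by stage, the argument becomes the standard one.
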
 

The spaces $\bstwo$, $\bsfour$, $\bpt$ and $\bptwo$ admit Alexandrov metrics of $\curv\geq 0$. Therefore the Soul Theorem can be applied. Moreover, given that the soul is a strong deformation retract of the space, in particular we have a homotopy equivalence. Whence, $\pi_k(\bstwo)\cong \pi_k(\mathbb{S}^2)$, $\pi_k(\bsfour)\cong \pi_k(\mathbb{S}^2)$, while $\pi_k(\bpt)=0$ and $\pi_k(\bptwo)\cong \pi_k(\mathbb{R}P^2)$ for all $k\geq 1$.

\section{Proofs}
\label{section:borel_conjecture_collapsing_alexandrov_3_spaces}

We proceed to prove Theorem \ref{thm:collapsed_aspherical_implies_manifold}. As stated in the Introduction, this result readily implies our main result, Theorem \ref{thm:Borel_Alex_collapsing}. 

\begin{proof}[Proof of Theorem \ref{thm:collapsed_aspherical_implies_manifold}]
We will proceed by contradiction. Let us suppose that the result in question does not hold. Then, there exists a sequence of closed, irreducible and aspherical Alexandrov $3$-spaces $\{X_i\}_{i=1}^{\infty}$ of $\curv X_i\geq -1$, satisfying that $\diam X_i\leq D$ and $\vol X_i \stackrel {_{i\to\infty}}{\longrightarrow}0$ which are not homeomorphic to $3$-manifolds. Therefore by Gromov's precompactness Theorem we can assume (possibly passing to a non-relabeled subsequence) that $X_i$ collapses in the Gromov-Hausdorff topology to a closed Alexandrov space $Y$ of dimension $<3$.  We will split the proof in three cases depending on whether $\dim Y= 0,1,2$ and obtain a contradiction in each case. Observe that by our contradiction assumption in the following analysis we will exclude any Alexandrov $3$-spaces appearing in the classification \cite{MitYam} that are homeomorphic to $3$-manifolds. 

\subsection{$2$-dimensional limit space}
\label{case:dimY=2}

In the case that $\dim Y=2$, we need to address two further cases depending on whether $X_i$ contains singular fibers with neighborhoods of type $\bpt$ or not. In the case in which $X_i$ does not contain fibers of type $\bpt$, then by \cite[Corollary 6.2]{GalNun}, the collapse $X_i  \stackrel {_{GH}}{\longrightarrow} Y$ is equivalent to the one obtained by collapsing along the orbits of a local circle action on $X_i$. Therefore, by \cite[Theorem B]{GalNun} $X_i$ is homeomorphic to a connected sum of the form $M\#\Susp(\Real P^2)\# \cdots \# \Susp(\Real P^2)$, where $M$ is a closed $3$-manifold admitting a local circle action. It now follows from \cite[Lemma 5.1]{GalGuiNun} that either $X_i$ is homeomorphic to $M$ or to $\Susp(\Real P^2)$. Since we are assuming $X_i$ is not homeomorphic to a $3$-manifold, we conclude that $X_i$ is homeomorphic to $\Susp(\mathbb{R}P^2)$. However, $\Susp(\Real P^2)$ is not aspherical, as a combination of the suspension isomorphism and the Hurewicz Theorem yields that $\pi_2(\Susp(\Real P^2))$ is isomorphic to $\mathbb{Z}_2$.

We move on to the case in which $X_i$ contains fibers with tubular neighborhoods of the form $\bpt$. Let us consider the branched orientable double cover $\tilde{X}_i$ of $X_i$. We now recall that, by \cite[Corollary 3.2]{GalGuiNun} $X_i$ collapses if and only if the sequence of branched orientable double covers $\tilde{X}_i$ collapses.  Then, by \cite[Corollary 6.2]{GalNun}, the connected sum decomposition in \cite[Theorem B]{GalNun}, and taking into account the orientability of $\tilde{X}_i$ we have that $\tilde{X}_i$ is homeomorphic to a connected sum of the form 
\begin{equation}
\label{eq-connected-sum}
(\csum_{\varphi}\mathbb{S}^2\times \mathbb{S}^1)\#(\csum_{j=1}^n L(\alpha_j,\beta_j))
\end{equation} 
where $L(\alpha_j,\beta_j)$ denotes a lens space determined by the Seifert invariants $(\alpha_j,\beta_j)$ (see \cite[Section 1.7]{Orl}).  

It was proved in \cite[Page 14]{GalGuiNun} that, in this situation, the connected sum \eqref{eq-connected-sum} cannot contain both lens spaces and copies of $\mathbb{S}^2\times \mathbb{S}^1$, that is, either $X_i$ is homeomorphic to a connected sum of lens spaces or to a connected sum of copies of $\mathbb{S}^2\times \mathbb{S}^1$. However, the irreducibility assumption implies, as in \cite[Case 5.6]{GalGuiNun}, that the expression \eqref{eq-connected-sum} cannot contain $\csum_{j=1}^n L(\alpha_j,\beta_j)$ as a connected summand and therefore only copies of $\mathbb{S}^2\times \mathbb{S}^1$ appear in the connected sum \ref{eq-connected-sum}. Therefore $X_i$ is homeomorphic to $\csum_{\varphi}\mathbb{S}^2\times \mathbb{S}^1 $. Furthemore, it follows from the irreducibility of $X_i$ as in \cite[Case 5.7]{GalGuiNun} that $\tilde{X}_i$ is homeomorphic to $\mathbb{S}^2\times \mathbb{S}^1$. Hence, it suffices to consider the case in which $X_i$ is a quotient of $\mathbb{S}^2\times \mathbb{S}^1$ by an orientation reversing involution $\iota: \mathbb{S}^2\times \mathbb{S}^1 \to \mathbb{S}^2\times \mathbb{S}^1$ having only isolated fixed points. 
 
Let us consider $\mathbb{S}^2\times \mathbb{S}^1$ as a subspace of $\mathbb{R}^3\times \mathbb{C}$ and denote its points by $((x,y,z),w)$. The classification \cite{Tol} of involutions on $\mathbb{S}^2\times \mathbb{S}^1$ yields that the involution $\iota_i$ on $\tilde{X}_i$ satisfying that $\tilde{X}_i/\iota_i\cong X_i$ is equivalent to the involution defined by 
\[
((x,y,z),w) \mapsto ((-x,-y,z),\overline{w})
\]
which has four fixed points. The quotient space of this involution is homeomorphic to $\Susp(\mathbb{R}P^2)\#\Susp(\mathbb{R}P^2)$ (see incise (a) of Case (2) in Page 7 of \cite{GalGui}). Then it follows, as in the proof of \cite[Theorem 6.1]{Nun}, that $X_i$ is not aspherical.

\subsection{$1$-dimensional limit space}
\label{ssec:1-dim-limit}

In this case, for sufficiently big $i$, $X_i$ is homeomorphic to a gluing of two pieces $B$ and $B'$ appearing in the classification \cite{MitYam}, along their isometric boundaries. As $X_i$ is not homeomorphic to a manifold, at least one of the following pieces must appear in the decomposition: $\bstwo$, $K_1(\mathbb{R}P^2)$, $\bsfour$, $\bpt$ and $\bptwo$. We will show in this section that every possible space $X_i$ having one of these pieces cannot be aspherical. 

\begin{case}[One of the pieces is $\bstwo$] The possible pieces in this case are $D^3$, $\mathbb{R}P^3\setminus \mathrm{int}D^3$ and $\bstwo$. By \cite[Remark 2.62]{MitYam}, $B(S_2)$ is homeomorphic to $\mathrm{Susp}(\mathbb{R}P^2)\setminus D^3$. Whence the possible spaces arising by gluing with this piece are homeomorphic to $\mathrm{Susp}(\mathbb{R}P^2)$, $\mathbb{R}P^3\# \mathrm{Susp}(\mathbb{R}P^2)$ and $\mathrm{Susp}(\mathbb{R}P^2)\#\mathrm{Susp}(\mathbb{R}P^2)$. All of these spaces have $\mathrm{Susp}(\mathbb{R}P^2)$ as a connected summand, and therefore the arguments of \cite[Theorem 6.1]{Nun} show that they are not aspherical, a contradiction.
\end{case}

\begin{case}[One of the pieces is $K_1(\mathbb{R}P^2)$] The only piece is the closed cone $K_1(\mathbb{R}P^2)$ and therefore, the space $X_i$ in this case is homeomorphic to $\mathrm{Susp}(\mathbb{R}P^2)$. As previously mentioned, this space is not aspherical and then a contradiction is ensued. 
\end{case}

As a first step to deal with the remaining cases, we observe that $H_2(X_i)\neq 0$ whenever $X_i$ contains a piece of the form $\bsfour$ or $\bpt$ a fact that follows from the following result due to Mitsuishi (see \cite[Corollary 5.7]{Mit}). The result is originally stated for the more general class of $NB$-spaces (see \cite[Definition 1.6]{Mit}). For simplicity we restate it here for Alexandrov spaces only.

\begin{thm}[Mitsuishi]
\label{thm:Mitsuishi_result}
Let $X$ be a closed, connected Alexandrov  $n$-space. If $X$ is non-orientable then the torsion subgroup of $H_{n-1}(X;\mathbb{Z})$ is isomorphic to $\mathbb{Z}_2$ and, in particular, $H_{n-1}(X;\mathbb{Z})$ is non-zero.  
\end{thm}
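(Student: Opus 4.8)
The plan is to localize the entire question to the open, dense, topologically regular part $X_{\mathrm{reg}}=X\setminus S(X)$, which is a genuine (non-compact) $n$-manifold, and to exploit that $S(X)$ is small. Transverse to any stratum of $S(X)$ the space of directions is a non-spherical closed positively curved Alexandrov space, and every closed $2$-dimensional Alexandrov space is a manifold, so topological singularities cannot occur in codimension $\le 2$; hence $\dim S(X)\le n-3$. In particular $H_n(S(X);R)=0$ for every coefficient ring $R$, and the Borel--Moore localization sequence of the closed pair $S(X)\subset X$ yields an injection $H_n(X;R)\hookrightarrow H_n^{\mathrm{BM}}(X_{\mathrm{reg}};R)$.

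First I would compute $H_n(X;-)$ for $R=\mathbb{Z},\mathbb{Z}_2$ and $\mathbb{Z}_p$ with $p$ odd. Since $X_{\mathrm{reg}}$ is a connected open non-orientable $n$-manifold, Poincar\'e duality identifies $H_n^{\mathrm{BM}}(X_{\mathrm{reg}};R)$ with the module of global sections of the $R$-orientation sheaf, and this vanishes as soon as the orientation character acts nontrivially on $R$. As $-1\ne 1$ in $\mathrm{Aut}(\mathbb{Z})$ and in $\mathrm{Aut}(\mathbb{Z}_p)$ for odd $p$, the injection above forces $H_n(X;\mathbb{Z})=0$ and $H_n(X;\mathbb{Z}_p)=0$. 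By contrast $H_n(X;\mathbb{Z}_2)\cong\mathbb{Z}_2$, because a closed Alexandrov $n$-space is a closed $n$-pseudomanifold and therefore carries a mod-$2$ fundamental class $[X]_2$.

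Next I would read off the torsion of $H_{n-1}(X;\mathbb{Z})$ formally. From $H_n(X;\mathbb{Z})=0$ the Universal Coefficient Theorem gives $H^n(X;\mathbb{Z})\cong\mathrm{Ext}(H_{n-1}(X;\mathbb{Z}),\mathbb{Z})\cong\mathrm{Tors}\,H_{n-1}(X;\mathbb{Z})$, a finite group, while the mod-$p$ Universal Coefficient Theorem identifies $\dim_{\mathbb{F}_p}H_n(X;\mathbb{Z}_p)$ with the number of cyclic $p$-power summands of that torsion. The computation above then leaves no odd torsion and exactly one $2$-primary cyclic summand, so $\mathrm{Tors}\,H_{n-1}(X;\mathbb{Z})\cong\mathbb{Z}_{2^k}$ for a single $k\ge 1$; in particular $H_{n-1}(X;\mathbb{Z})\ne 0$. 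The remaining point, which I expect to be the crux, is to prove $k=1$. Here the key observation is that $k=1$ if and only if the mod-$2$ Bockstein $\beta_2[X]_2\in H_{n-1}(X;\mathbb{Z}_2)$ is nonzero: the integral Bockstein of $[X]_2$ generates the socle $2^{k-1}\mathbb{Z}_{2^k}$, whose reduction mod $2$ is nonzero precisely when $k=1$. To establish $\beta_2[X]_2\ne 0$ I would again restrict to $X_{\mathrm{reg}}$: the restriction $H_*(X)\to H_*^{\mathrm{BM}}(X_{\mathrm{reg}})$ sends $[X]_2$ to the fundamental class of the manifold $X_{\mathrm{reg}}$, and by naturality of the Bockstein it sends $\beta_2[X]_2$ to $\beta_2[X_{\mathrm{reg}}]_2$; on the honest manifold $X_{\mathrm{reg}}$ the latter is Poincar\'e dual to $w_1(X_{\mathrm{reg}})$, which is nonzero precisely because $X_{\mathrm{reg}}$ is non-orientable. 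Hence $\beta_2[X]_2\ne 0$, so $k=1$ and $\mathrm{Tors}\,H_{n-1}(X;\mathbb{Z})\cong\mathbb{Z}_2$.

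The principal obstacle throughout is that a closed Alexandrov space is in general \emph{not} a homology manifold: at a topologically singular point $x$ the local homology $H_*(X,X\setminus x)\cong\widetilde H_{*-1}(\Sigma_xX)$ is not concentrated in degree $n$, so the classical manifold argument through global Poincar\'e duality is unavailable on $X$ itself. The whole strategy is engineered to sidestep this by transporting only top-degree and Bockstein information to and from $X_{\mathrm{reg}}$, where duality does hold; the delicate points to verify carefully are the exactness of the localization sequence in this setting and the compatibility of the restriction map with fundamental classes and with the Bockstein homomorphism.
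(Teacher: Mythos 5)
This statement is not proved in the paper at all: it is quoted verbatim from Mitsuishi \cite[Corollary 5.7]{Mit}, so there is no internal proof to compare against. Your reconstruction is, as far as I can tell, essentially the strategy of the cited source: localize to the open manifold part $X_{\mathrm{reg}}$, use that the topologically singular set has codimension at least $3$ to transfer top-degree (Borel--Moore) information across the localization sequence, and then extract the torsion of $H_{n-1}$ from $H_n(X;R)$ for varying $R$ together with a Bockstein argument. The skeleton is sound: the injection $H_n(X;R)\hookrightarrow H_n^{\mathrm{BM}}(X_{\mathrm{reg}};R)$ is in fact an isomorphism (since $\dim S(X)\le n-3$ kills both $H_n^{\mathrm{BM}}(S(X))$ and $H_{n-1}^{\mathrm{BM}}(S(X))$), the universal-coefficient bookkeeping pinning the torsion down to a single $\mathbb{Z}_{2^k}$ is correct, and the criterion ``$k=1$ iff $\beta_2[X]_2\ne 0$'' together with the non-vanishing of $\beta_2$ of the Borel--Moore fundamental class of a non-orientable manifold (equivalently, the failure of $[X_{\mathrm{reg}}]_2$ to lift to $\mathbb{Z}_4$-coefficients, by twisted duality) closes the argument.

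Two steps need more honest justification. First, the claim $\dim S(X)\le n-3$ is true but your one-line argument (``transverse to any stratum the space of directions would be a non-spherical closed $1$-dimensional Alexandrov space'') is a plausibility heuristic, not a proof: it presupposes a local product structure along the singular set, which is exactly the content of Perelman's MCS stratification theorem. You should invoke that theorem explicitly (it also gives you the connectedness of $X_{\mathrm{reg}}$, which you use tacitly when applying Poincar\'e duality with the orientation sheaf). Second, the assertion that $H_n(X;\mathbb{Z}_2)\cong\mathbb{Z}_2$ ``because a closed Alexandrov $n$-space is a closed $n$-pseudomanifold'' is problematic as stated, since the pseudomanifold notion you are appealing to presupposes a triangulation, and triangulability of Alexandrov spaces is open beyond dimension $3$. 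This is harmless, though: the same localization sequence you already set up gives $H_n(X;\mathbb{Z}_2)\cong H_n^{\mathrm{BM}}(X_{\mathrm{reg}};\mathbb{Z}_2)\cong H^0(X_{\mathrm{reg}};\mathbb{Z}_2)\cong\mathbb{Z}_2$ with no reference to triangulations, so you should delete the pseudomanifold appeal and cite the sequence instead. With those two repairs, and with the routine verifications you already flag (exactness of the Borel--Moore localization sequence for the ANR pair $(X,S(X))$, naturality of the Bockstein under restriction to an open subset), the argument is complete.
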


In order to obtain that $\pi_2(X_i)\neq 0$ using the information that $H_2(X_i)\neq 0$ we will use the following classical theorem proved by Hopf in \cite[Theorem a), Page 257]{Hopf}.

\begin{thm}[Hopf]
\label{thm:Hopf}
Let $X$ be a $CW$-complex with finitely many cells. Then, there exists an exact sequence
\begin{equation}
\label{eq:Hopf_Sequence}
\pi_2(X)\to H_2(X)\to H_2(B\pi_1(X);\mathbb{Z})\to 0.
\end{equation}
\end{thm}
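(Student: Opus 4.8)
The plan is to compare $X$ with a $K(G,1)$ for $G=\pi_1(X)$, using that such a space can be built from $X$ by attaching cells only in dimensions $\geq 3$. First I would set $G=\pi_1(X)$ and enlarge $X$ to a CW-complex $Y$ by attaching $3$-cells to kill $\pi_2(X)$ and then cells of dimension $\geq 4$ to kill $\pi_3,\pi_4,\dots$ successively. The complex $Y$ is then aspherical with $\pi_1(Y)\cong G$, hence homotopy equivalent to $B\pi_1(X)=K(G,1)$; in particular $H_2(Y)\cong H_2(B\pi_1(X);\mathbb{Z})$ since both compute the homology of a $K(G,1)$. By construction $X$ is a subcomplex of $Y$ and every cell of $Y\setminus X$ has dimension $\geq 3$, so the relative homology vanishes in low degrees: $H_n(Y,X)=0$ for $n\leq 2$.

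Feeding the pair $(Y,X)$ into the long exact sequence in homology and using $H_2(Y,X)=0$ together with $H_2(Y)\cong H_2(B\pi_1(X);\mathbb{Z})$, the relevant tail becomes
\[
H_3(Y,X)\xrightarrow{\ \partial\ }H_2(X)\xrightarrow{\ i_*\ }H_2(B\pi_1(X);\mathbb{Z})\to 0 ,
\]
where $i\colon X\hookrightarrow Y$ is the inclusion. Surjectivity of $i_*$ already matches the right end of \eqref{eq:Hopf_Sequence}, so it remains only to show that the image of $\partial$ equals the image of the Hurewicz homomorphism $h\colon\pi_2(X)\to H_2(X)$; this lets me splice $\pi_2(X)$ in on the left and obtain exactness at $H_2(X)$ with $h$ as the first arrow.

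I would prove $\operatorname{im}\partial=\operatorname{im} h$ by two inclusions. For $\operatorname{im}\partial\subseteq\operatorname{im} h$, note that $H_3(Y,X)$ is generated by the relative $3$-cells (there being no relative cells below dimension $3$), and the connecting homomorphism sends the class of a $3$-cell attached along $\phi_\alpha\colon\mathbb{S}^2\to X$ to $(\phi_\alpha)_*[\mathbb{S}^2]=h([\phi_\alpha])\in\operatorname{im} h$. For the reverse inclusion I would invoke naturality of the Hurewicz map with respect to $i$: the comparison square gives $i_*\circ h=h_Y\circ i_\#$, and since $Y$ is aspherical the map $i_\#\colon\pi_2(X)\to\pi_2(Y)=0$ vanishes, whence $\operatorname{im} h\subseteq\ker i_*=\operatorname{im}\partial$ by exactness. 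Combining the inclusions yields $\operatorname{im}\partial=\operatorname{im} h=\ker i_*$, and the sequence \eqref{eq:Hopf_Sequence} follows. The main obstacle is precisely this identification $\operatorname{im}\partial=\operatorname{im} h$: it rests on the cellular description of the connecting homomorphism (a relative $3$-cell boundaries onto the Hurewicz image of its attaching sphere) together with the vanishing of $i_\#$ on $\pi_2$. Everything else --- the construction of $K(G,1)$ by attaching high-dimensional cells and the exact sequence of a pair --- is standard, and the finiteness hypothesis on $X$ plays no essential role beyond reflecting the compact setting of the application. Alternatively, one could read off \eqref{eq:Hopf_Sequence} from the five-term exact sequence of the Cartan--Leray spectral sequence of the universal cover $\widetilde X\to X$, using $H_1(\widetilde X)=0$ and $H_2(\widetilde X)\cong\pi_2(X)$.
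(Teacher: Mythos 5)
Your argument is correct and complete. Note, however, that the paper offers no proof of this statement at all: it is quoted as a classical theorem with a citation to Hopf's original article, so there is no internal argument to compare against. What you give is the standard modern proof: embed $X$ into a $K(\pi_1(X),1)$ by attaching cells of dimension $\geq 3$, read off surjectivity of $H_2(X)\to H_2(B\pi_1(X))$ from $H_2(Y,X)=0$ in the long exact sequence of the pair, and identify $\ker i_*=\operatorname{im}\partial$ with the Hurewicz image via the two inclusions you describe (the cellular description of $\partial$ on relative $3$-cells for one direction, naturality of Hurewicz plus $\pi_2(Y)=0$ for the other). Both key steps are sound, and your closing remark that the sequence is equivalently the five-term exact sequence of the Cartan--Leray spectral sequence of $\widetilde X\to X$ (using $H_1(\widetilde X)=0$ and $H_2(\widetilde X)\cong\pi_2(X)$) is also correct. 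You are likewise right that the finiteness hypothesis on the cells is not needed for the theorem itself; it merely reflects the compact setting in which the paper applies it.
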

Here, $B\pi_1(X)$ denotes a model for the classifying space of the fundamental group $\pi_1(X)$, which is characterized up to homotopy by being an aspherical $CW$-complex having the same fundamental group as $X$. As $\pi_1(X)$ depends on the pieces $B$ and $B'$ we will split the following analysis to go over every possibility. 

\begin{case}[One of the pieces is $\bsfour$] The possible pieces $B$ and $B'$ with $X_i\cong B\cup B'$ in this case are $S^1\times D^2$, $S^1\times \mathrm{Mo}$, $K^2\tilde{\times} I$ and $B(S_4)$. We assume that $B'=B(S_4)$ is fixed.

\begin{subcase}[$B=B(S_4)$]
\label{sc:BS4}
If both pieces of the decomposition of $X_i$ are homeomorphic to $B(S_4)$, then Van Kampen's Theorem readily implies that $\pi_1(X_i)=0$. Moreover, by Theorem \ref{thm:Mitsuishi_result}, $H_2(X_i)\neq 0$. Therefore, by Hurewicz's Theorem, $\pi_2(X_i)\neq 0$, which contradicts the asphericity of $X_i$. 
\end{subcase}

\begin{subcase}[$B=S^1\times\mathrm{Mo}$]
The fundamental group of $X_i$ in this case is $\mathbb{Z}\oplus \mathbb{Z}$, as calculated from Van Kampen's Theorem. A model for the classifying space $B(\mathbb{Z}\oplus \mathbb{Z})$ is the torus $T^2$. Hence, the sequence \eqref{eq:Hopf_Sequence} becomes
\begin{equation}
\label{eq:BS4-S1xMo}
\pi_2(X_i) \rightarrow H_2(X_i)\rightarrow \mathbb{Z}\rightarrow 0.
\end{equation}

Here, we have used the fact that $H_2(T^2)\cong \mathbb{Z}$ (as a consequence of the orientability of $T^2$). Therefore, if $\pi_2(X_i)=0$ we would obtain that $H_2(X_i)\cong \mathbb{Z}$, in particular yielding that $H_2(X_i)$ is torsion-free. This contradicts Theorem \ref{thm:Mitsuishi_result}. Therefore, $\pi_2(X_i)\neq 0$ which is a contradiction to the asphericity of $X_i$. 
\end{subcase}

\begin{subcase}[$B=S^1\times D^2$] 
\label{sc:S1xD2}
In this case, a computation via the Van Kampen's Theorem yields that $\pi_1(X_i)\cong \mathbb{Z}$. Now, a model for $B\mathbb{Z}$ is the circle $S^1$. Hence, Hopf's exact sequence \eqref{eq:Hopf_Sequence} takes the form
\begin{equation}
\label{eq:BS4_s1xD2}
\pi_2(X_i)\rightarrow H_2(X_i)\rightarrow 0\rightarrow 0. 
\end{equation}
Therefore, the morphism $\pi_2(X_i)\to H_2(X_i)\neq 0$ is surjective, implying that $\pi_2(X_i)\neq 0$. This contradicts the assumption that $X_i$ is aspherical. 
\end{subcase}

\begin{subcase}[$B=K^2\tilde{\times} I$]
\label{sc:K2tildeI}
As in the previous cases, using Van Kampen's theorem it follows that the fundamental group of $X_i$ is that of $K^2\tilde{\times} I$. Since $I$ is contractible, $\pi_1(K^2\tilde{\times}I)\cong \pi_1(K^2)\cong \mathbb{Z}\rtimes \mathbb{Z}$. We now note that a model for $B(\mathbb{Z}\rtimes \mathbb{Z})$ is the Klein bottle. Therefore by the non-orientability of $K^2$, the Hopf's sequence \eqref{eq:Hopf_Sequence} becomes
\begin{equation}
\label{eq:BS4_K2tildeI}
\pi_2(X_i)\rightarrow H_2(X_i)\rightarrow 0 \rightarrow 0.
\end{equation}
Whence, as in the previous case, $\pi_2(X_i)\neq 0$.
\end{subcase}
\end{case}

\begin{case}[One of the pieces is $\bpt$]
\label{case:bpt}
In this case the possible pieces taking the role of $B$ and $B'$ are $S^1\tilde{\times} D^2$, $K^2\hat{\times} I$, $\bpt$ and $\bptwo$. We will exclude the space $\bpt\cup \bptwo$ in this case as it will be considered below. Let us observe that by Van Kampen's Theorem, the possible fundamental groups of $X_i$ in this case are the same that appear in the case that one of the pieces is $\bsfour$. Therefore, a contradiction to the asphericity of $X_i$  is obtained for $\bpt\cup \bpt$ as in Case \ref{sc:BS4}, for $\bpt K^2\hat{\times}I$ as in Case \ref{sc:K2tildeI}, and for $\bpt \cup S^1\tilde{\times} D^2$ as in Case \eqref{sc:S1xD2}. 
\end{case}

We now address the remaining case in which one of the pieces in the decomposition of $X_i$ is $\bptwo$. 

\begin{case}[One of the pieces is $\bptwo$]
Under this assumption, the possible pieces $B$ and $B'$ forming $X_i$ are $S^1\tilde{\times} D^2$, $K^2\hat{\times} I$, $\bpt$ and $\bptwo$. As some possibilities overlap with the previous Case \eqref{case:bpt}, we only consider the spaces $S^1\tilde{\times} D^2 \cup \bptwo$, $K^2\hat{\times} I \cup \bptwo$, $\bpt \cup \bptwo$ and $\bptwo \cup \bptwo$ here. To address the question of asphericity of these spaces we apply the following result (see \cite[Lemma 4.1]{Luck})

\begin{thm}
\label{thm:aspherical_fund_group_no_torsion}
The fundamental group of an aspherical finite-dimensional $CW$-complex is torsion-free.
\end{thm}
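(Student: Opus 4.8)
The plan is to pass to a suitable covering space and derive a contradiction from the fact that a nontrivial finite cyclic group has integral homology in arbitrarily high degrees. Let $X$ be an aspherical $CW$-complex with $\dim X = d < \infty$ and write $\pi = \pi_1(X)$. Since $X$ is aspherical, its universal cover is contractible, so $X$ is itself a model for the classifying space $B\pi$; in particular $H_k(\pi;\mathbb{Z}) \cong H_k(X;\mathbb{Z})$, and the latter vanishes for $k > d$ because $X$ has no cells above dimension $d$. The goal is to show that the assumption of torsion in $\pi$ is incompatible with this vanishing.

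Suppose, for contradiction, that $\pi$ contains a nontrivial element $g$ of finite order, and set $C = \langle g \rangle \cong \mathbb{Z}_m$ with $m \geq 2$. First I would consider the connected covering space $p \colon X_C \to X$ corresponding to the subgroup $C \leq \pi$. Lifting the cell structure of $X$ equips $X_C$ with a $CW$-structure of dimension at most $d$. Moreover, since a covering map induces isomorphisms on all higher homotopy groups, one has $\pi_1(X_C) \cong C$ and $\pi_k(X_C) \cong \pi_k(X) = 0$ for all $k \geq 2$. Hence $X_C$ is a finite-dimensional $K(\mathbb{Z}_m,1)$, so it is a model for $B\mathbb{Z}_m$, and consequently $H_k(\mathbb{Z}_m;\mathbb{Z}) \cong H_k(X_C;\mathbb{Z}) = 0$ for every $k > d$.

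This is the desired contradiction. The homology of a finite cyclic group is periodic and nonzero in infinitely many degrees: the standard free resolution of $\mathbb{Z}$ over $\mathbb{Z}[\mathbb{Z}_m]$ gives $H_{2j-1}(\mathbb{Z}_m;\mathbb{Z}) \cong \mathbb{Z}_m \neq 0$ for all $j \geq 1$, so $H_k(\mathbb{Z}_m;\mathbb{Z})$ cannot vanish for all large $k$. Therefore $\pi$ admits no torsion, which is what we wanted to prove.

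The only genuinely nontrivial input—and hence the main obstacle—is the computation that a finite cyclic group has nonvanishing integral homology in infinitely many degrees, equivalently that $\mathbb{Z}_m$ has infinite cohomological dimension. Everything else (asphericity yielding $X \simeq B\pi$, the cover $X_C$ being a finite-dimensional $K(\mathbb{Z}_m,1)$, and covering spaces of $CW$-complexes preserving dimension) is formal. One could equivalently phrase the whole argument in the language of cohomological dimension: a finite-dimensional $K(\pi,1)$ forces $\mathrm{cd}(\pi) < \infty$, the inequality $\mathrm{cd}(C) \leq \mathrm{cd}(\pi)$ holds for any subgroup $C \leq \pi$, and $\mathrm{cd}(\mathbb{Z}_m) = \infty$; I nonetheless prefer the covering-space formulation as the most transparent in this topological setting.
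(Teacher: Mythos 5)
Your argument is correct and complete: passing to the cover corresponding to a finite cyclic subgroup $C\cong\mathbb{Z}_m$ yields a finite-dimensional $K(\mathbb{Z}_m,1)$, which is impossible since $H_{2j-1}(\mathbb{Z}_m;\mathbb{Z})\cong\mathbb{Z}_m\neq 0$ for all $j\geq 1$. The paper itself offers no proof, only a citation to Lemma 4.1 of L\"uck's survey on aspherical manifolds, and your covering-space argument is precisely the standard proof given there, so there is nothing to reconcile.
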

 
An analysis via Van Kampen's Theorem yields that the possible fundamental groups of $X_i$ in this case are $\mathbb{Z}\ast_{\pi_1(K^2)} \mathbb{Z}_2$, $(\mathbb{Z}\rtimes \mathbb{Z})\ast_{\pi_1(K^2)} \mathbb{Z}_2$, $\mathbb{Z}_2$ and $\mathbb{Z}_2\ast_{\pi_1(K^2)} \mathbb{Z}_2$. It is immediate to check that these groups have non-zero torsion, as at least one of the factors in the amalgamated products has non-zero torsion.  Therefore Theorem \ref{thm:aspherical_fund_group_no_torsion} yields that $X_i$ cannot be aspherial, a contradiction. 
\end{case}

\subsection{$0$-dimensional limit space}
In this case, if $X_i$ is a generalized Seifert fibered space then the contradiction is obtained as in Section \ref{case:dimY=2}. If $X_i$ is homeomorphic to a space appearing in the $1$-dimensional limit case, then the contradiction is obtained as in Section \ref{ssec:1-dim-limit}. The remaining cases are non-negatively curved (non-manifold) Alexandrov $3$-spaces with finite fundamental group. In these cases,  if $\pi_1(X_i)=0$, Theorem \ref{thm:Mitsuishi_result}, implies that $X_i$ is not aspherical. Furthermore, if $\pi_1(X_i)$ is non-trivial then \ref{thm:aspherical_fund_group_no_torsion} yields that $X_i$ is not aspherical. Hence, in every case we obtain a contradiction to asphericity and the result is settled.   
\end{proof}

\begin{rem}
\label{conjecture}
In light of Theorem \ref{thm:collapsed_aspherical_implies_manifold}, a natural conjecture would be that a closed, aspherical Alexandrov $3$-space $X$ is homeomorphic to a $3$-manifold. This is indeed the case whenever $X$ is simply-connected as a consequence of incise (2) of \cite[Corollary 5.7]{Mit} and the Hurewicz Theorem. 
\end{rem}


\section{Questions  on fundamental  groups  and  actions  on  Alexandrov 3-Spaces}\label{section:groups_spaces_questions}

A  lot  of  effort  has  been  devoted  in geometric  topology  to   the  development of  characterization of   fundamental groups of  manifolds  and  spaces  which  are  topologically  rigid. On  the  other  hand, a  similar  fruitful  effort  has  been  devoted  to  the  characterization of  groups which  act on geometrically defined classes of  manifolds.  

We  will  now  consider  a series  of  questions inspired  by  the study  of  topological  rigidity of manifolds  and  their  extrapolation  to (possibly  singular) Alexandrov $3$-spaces.

These questions evolve  from  topological  or  geometric rigidity    results such   as  the  Borel  Conjecture for  the  fundamental group $G$  of  an  aspherical manifold, into the characterization  of  fundamental  groups of   such  spaces  through concepts  of  geometric  group  theory  or group  cohomology. 

Specifically  related  to  group  cohomology, it  is  an  open  conjecture originally  posed    by  Wall  \cite{wallpoincare3} that  every  poincar\'e  duality  group  of  dimension $3$ is  the  fundamental  group of  a  three  dimensional  manifold. 

See  \cite{lueckferryweinberger},  \cite{hillmannthreedimensionalpoincare} for a  modern discussions on the  subject. Here, we  present  the  following  question: 
 
\begin{question}\label{prob:poincare}
Let  $G$  be   a  Poincar\'e  duality group  which is  the  fundamental  group  of  an  aspherical Alexandrov $3$-space. Is  it  the  fundamental group of an orientable three dimensional manifold? 
 \end{question} 
 
Question  \ref{prob:poincare} follows  readily  from Conjecture \ref{conj:alex-manifold}.

 In  the  direction  of characterizations  of  groups acting  on   manifolds, it  is  a classical  result  by Wall \cite{wallpoincare3},  that the  finite  groups  acting on three-dimensional  Poincar\'e  complexes  have  periodic  cohomology  of  period  $4$. It  is  known that the symmetric group on three  letters  $\Sigma_3$  cannot  be  realized  by  any  honest manifold \cite{milnorfreeactionsspheres}.

\begin{question}\label{prob:groups}
 Which finite  groups  act  by  homeomorphisms on Alexandrov $3$-spaces?   
\end{question}

The  study  of  geometric  and  large  scale  geometric  properties  of  fundamental   groups  of  three  dimensional  manifolds in connection  with topological  rigidity has been  oriented  in  recent  times   to  the characterization of the  map involved  in  the homotopy  equivalence referred  to in the  statement. Consider  as  an  example  the  following question.

\begin{question}
Let  $f: M\to N$  be  a  map   between three  dimensional, aspherical  manifolds  with  boundary  inducing  an  epimorphism  in fundamental groups. Under  which  conditions  is  $f$ homotopic  to  a  homeomorphism.? 
\end{question}

This problem  has  been studied  using  simplicial  volume, specifically  degree  theorems,  and  more  recently,  Agol's solution  to  the  virtually  fibering  conjecture  by  Boileau and  Friedl \cite{boileaufriedlepi}. 
 
(The  epimorphic  condition  comes  from  the  fact  that  a  degree  one  map  induces  such  an  epimorphism due  to  Poincar\'e  Duality  and  the  loop  theorem). 
\subsection{Consequences  of  Agol's virtually fibering Theorem}
The  following  theorem  was proved  by Boileau  and Friedl, \cite{boileaufriedlepi}. 
\begin{theorem}
Let $f: M\to N$ be  a  proper map  between aspherical  manifolds  with  either  toroidal  or  empty boundary. Assume  that  $N$  is  not  a  closed graph manifold, and that  $f$ induces  an epimorphism  on the  fundamental  group.Then  $f$  is  homotopic  to  a  homeomorphism if  any  of  the  following  two  conditions are  met: 
\begin{itemize}
\item For  each  $H$ finite  index, subnormal  subgroup  of $\pi_1(N)$, the  ranks  of  $H$ and  $f^{-1}(H)$ agree.  
\item For  each finite  cover $\widetilde{N}$  of $N$,  the  Heegard genus of $\widetilde{N}$  and $\widetilde{M}$ agree.  
\end{itemize}
\end{theorem}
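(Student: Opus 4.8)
The plan is to convert the two numerical hypotheses into the single statement that $f_*\colon\pi_1(M)\to\pi_1(N)$ is an \emph{isomorphism}, after which the conclusion is essentially formal. Indeed, once $f_*$ is an isomorphism, the asphericity of $M$ and $N$ exhibits them as models for $K(\pi_1(M),1)$ and $K(\pi_1(N),1)$, so $f$ is a homotopy equivalence; using properness together with the incompressibility of the toroidal boundary one promotes this to a homotopy equivalence of pairs $(M,\partial M)\to(N,\partial N)$. At that point I would invoke topological rigidity in dimension three, namely Waldhausen's theorem for Haken manifolds with (possibly empty) toroidal boundary, together with the Borel Conjecture in the closed aspherical case as furnished by Geometrization, to conclude that $f$ is homotopic to a homeomorphism. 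Thus all of the difficulty is concentrated in promoting the \emph{epimorphism} $f_*$ to an \emph{isomorphism}, and the two bullet points are to be read as two independent sufficient conditions for doing so.

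To exploit the hypotheses, which are really statements about finite covers, I would first bring in Agol's virtual fibering theorem. Since $N$ is aspherical and is not a closed graph manifold, its geometric decomposition contains a hyperbolic piece, so $\pi_1(N)$ is virtually special in the sense of Wise; hence $N$ is virtually fibered and $\pi_1(N)$ is LERF and residually finite, and the same then holds for $M$ because $f_*$ is onto. Now a finite-index subnormal $H\le\pi_1(N)$ corresponds to a finite cover $N_H\to N$, its full preimage $H':=f_*^{-1}(H)\le\pi_1(M)$ has the same index and corresponds to a cover $M_H\to M$, and $f$ lifts to a proper map $f_H\colon M_H\to N_H$ with $f_{H*}\colon H'\twoheadrightarrow H$ surjective and with the \emph{same} kernel $K:=\ker f_*$, since $K\subseteq H'$. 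As a quotient needs no more generators than the group, one always has $d(H')\ge d(H)$, where $d(\cdot)$ denotes the minimal number of generators; the first bullet asserts equality for every such $H$, and via the inequality $g\ge d(\pi_1)$ the Heegaard-genus bullet is a parallel equality constraint.

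The heart of the matter is to show $K=1$. Assuming $K\neq1$, the strategy is to locate a single finite-index subnormal $H$ lying on a fibered face of the Thurston norm of $N_H$, for which the fibration pins down both the minimal generating set and the Heegaard genus of the base $N_H$, while the nontrivial kernel $K$ forces $M_H$ to carry strictly more $\pi_1$-generators, i.e.\ $d(H')>d(H)$; this contradicts the rank hypothesis, and the analogous genus discrepancy contradicts the Heegaard-genus hypothesis. The fibered structure supplied by Agol is what makes the base rigid, and simplicial volume together with the Gromov--Soma degree theorem is what controls $f_H$ in the presence of hyperbolic pieces, ensuring that a proper surjection with nontrivial kernel cannot preserve all of these invariants simultaneously along every cover.

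I expect the main obstacle to be precisely this last step. Rank and Heegaard genus are non-monotone and essentially uncomputable under maps and passage to covers, and the vanishing of the $L^2$-Betti numbers of aspherical $3$-manifolds kills the naive rank-gradient argument, so no asymptotic invariant detects $K$; one must instead exhibit a \emph{specific} fibered cover in which the discrepancy becomes visible. Establishing this rigidity of the generator count and of the genus along a fibered face, using Agol's virtual fibering and Thurston-norm estimates, is the genuinely hard input. Granting it, the passage to a $\pi_1$-isomorphism is complete and the reduction of the first paragraph delivers the homeomorphism.
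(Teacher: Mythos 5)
First, a point of orientation: the paper does not prove this statement at all --- it is quoted from Boileau and Friedl \cite{boileaufriedlepi} as background for the questions of Section \ref{section:groups_spaces_questions}, so there is no internal proof to compare yours against. Judged on its own terms, your outline does reproduce the correct architecture of the Boileau--Friedl argument: reduce everything to showing that $f_*$ is an isomorphism (after which asphericity, Waldhausen's rigidity theorem for Haken manifolds and the three-dimensional Borel conjecture finish the job), and attack the kernel $K=\ker f_*$ by exploiting the virtual fibering of $N$, which is exactly where the hypothesis that $N$ is not a closed graph manifold enters via Agol, Wise, Przytycki--Wise and Liu.

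However, the proposal has a genuine gap precisely where you concede one: you never establish that $K\neq 1$ forces $d\bigl(f_*^{-1}(H)\bigr)>d(H)$, or a Heegaard genus discrepancy, for \emph{some} finite-index subnormal $H$, and this is the entire content of the theorem --- everything else is formal. The ingredients you gesture at do not obviously supply it: simplicial volume and degree theorems control degree-one maps, but nothing in the hypotheses forces $f$ to have degree one (the rank condition makes no reference to degree), and your own remark about vanishing $L^2$-Betti numbers correctly rules out the naive asymptotic rank-gradient argument without replacing it by anything. There are also two smaller inaccuracies. Residual finiteness and LERF do \emph{not} pull back along surjections, so ``the same then holds for $M$ because $f_*$ is onto'' is a non sequitur; one should instead invoke that $\pi_1(M)$ is itself an aspherical $3$-manifold group, hence residually finite by Hempel's theorem and Geometrization. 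And a graph manifold with nonempty toroidal boundary satisfies all the hypotheses while having no hyperbolic piece in its geometric decomposition, so your justification of virtual specialness (``the geometric decomposition contains a hyperbolic piece'') is stated too narrowly; the correct input is that every aspherical $3$-manifold with empty or toroidal boundary other than a closed graph manifold has virtually RFRS fundamental group. As it stands the proposal is a plausible roadmap, not a proof.
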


\begin{question}
Let $f: M\to N$  be  an  oriented  map  between oriented, aspherical  Alexandrov  spaces. What  are  the  conditions  on $f$, $M$, and  $N$ for $f$ be  homotopic  to  homemomorphism?    
\end{question}


\bibliographystyle{amsplain}

\begin{thebibliography}{10}

\bibitem{BarLuc} A.~Bartels and W.~L\"uck, \textit{The Borel conjecture for hyperbolic and $\mathrm{CAT}(0)$-groups}, Ann. of Math. (2) 175 (2012), no. 2, 631--689. 

\bibitem{boileaufriedlepi} M.~Boileau and S.~Friedl, \textit{Epimorphisms of 3-manifold groups.} Q. J. Math., 69(3):931--942, 2018.


\bibitem{BurBurIva} D.~Burago,  Y.~Burago and S.~Ivanov, \textit{A course in metric geometry}, Graduate Studies in Mathematics, vol.~33, American Mathematical Society, Providence, RI, 2001.

\bibitem{DenGalGuiMunn} Q.~Deng, F.~Galaz-Garc\'ia, L.~Guijarro and M.~Munn, \textit{Three-Dimensional Alexandrov Spaces with Positive or Nonnegative Ricci Curvature}, Potential Anal. 48 (2018), no. 2, 223--238.

\bibitem{lueckferryweinberger}
S.~Ferry, W.~L\"uck, and S.~Weinberger. \textit{On the stable Cannon conjecture}, preprint (2018) {\tt arXiv:1804.00738 [math.GT]}

\bibitem{GalSurvey} F.~Galaz-Garc\'ia, \textit{A glance at three-dimensional Alexandrov spaces}, Front. Math. China 11 (2016), no. 5, 1189--1206.

\bibitem{GalGui} F.~Galaz-Garc\'ia and L.~Guijarro, \textit{On three-dimensional Alexandrov spaces}, Int. Math. Res. Not. IMRN 2015, no. 14, 5560--5576.

\bibitem{hillmannthreedimensionalpoincare} J.~Hillmann, \textit{ Some questions on subgroups of 3-dimensional poincar\'e duality  groups}, preprint (2016) {\tt arXiv:1608.01407 [math.GR]}

\bibitem{GalGuiNun} F.~Galaz-Garc\'ia, L.~Guijarro and J.~N\'u\~{n}ez-Zimbr\'on, \textit{Sufficiently collapsed irreducible Alexandrov $3$-spaces are geometric}, to appear Indiana U. Math. J., preprint (2017) {\tt arXiv:1709.02336 [math.DG]} 

\bibitem{GalNun} F.~Galaz-Garc\'ia and J.~N\'u\~{n}ez-Zimbr\'on, \textit{Three-dimensional Alexandrov spaces with local isometric circle actions}, to appear Kyoto J. Math., preprint (2017) {\tt 	arXiv:1702.06080 [math.DG]}

\bibitem{GroWil} K.~Grove and B.~Wilking, \textit{A knot characterization and $1$-connected nonnegatively curved $4$-manifolds with circle symmetry}, Geom. Topol. 18 (2014), no. 5, 3091--3110.

\bibitem{Hopf} H.~Hopf, \textit{Fundamentalgruppe und zweite Bettische Gruppe},  Comment. Math. Helv. 14, (1942). 257--309.


\bibitem{Luck} W.~L\"uck, \textit{Survey on aspherical manifolds}, European Congress of Mathematics, 53--82, Eur. Math. Soc., Z\"urich, 2010.
\bibitem{LuRe} W.~L\"uck and H. Reich, \textit{The Farrell-Jones  and  the Baum-Connes Conjectures in {K}- and {L}- Theory}, In {\em Handbook of K-Theory}, Volume  2,  pages 703--842. Springer Verlag, 2005.  

\bibitem{milnorfreeactionsspheres} J.~Milnor. \textit{Groups which act on {$S^n$} without fixed points.} {Amer. J. Math.}, 79:623--630, 1957.

\bibitem{Mit} A.~Mitsuishi, \textit{Orientability and fundamental classes of Alexandrov spaces with applications}, preprint (2016) {\tt arXiv:1610.08024 [math.MG]}

\bibitem{MitYam} A.~Mitsuishi and T.~Yamaguchi, \textit{Collapsing three-dimensional closed Alexandrov spaces with a lower curvature bound}, Trans. Amer. Math. Soc. 367 (2015), no. 4, 2339--2410.

\bibitem{Nun} J.~N\'u\~{n}ez-Zimbr\'{o}n, \textit{Closed three-dimensional Alexandrov spaces with isometric circle actions}, Tohoku Math. J. (2) 70 (2018), no. 2, 267--284. 

\bibitem{Orl} P.~Orlik, \textit{Seifert manifolds}, Lecture Notes in Mathematics, Vol. 291. Springer-Verlag, Berlin-New York, 1972.

\bibitem{Per2} G.~Perelman, \textit{A.~D.~Alexandrov’s spaces with curvatures bounded from below II}, Preprint.

\bibitem{Per} G.~Perelman, \textit{Elements of Morse theory on Aleksandrov spaces}, St. Petersburg Math. J. 5 (1994), 205--213.

\bibitem{Por} J.~Porti, \textit{Geometrization of three manifolds and Perelman's proof}, Rev. R. Acad. Cienc. Exactas F\'is. Nat. Ser. A Math. RACSAM 102, no.1, (2008), 101–125.

\bibitem{Star} E. ~ Stark, \textit{Topological  Rigidity  fails  for  quotients  of  the  Davis  Complex}, preprint (2016) {\tt arXiv:1610.08699 [math.GT]} 

\bibitem{TakYok} Y.~Takeuchi and M.~Yokoyama, \textit{Waldhausen's classification theorem for $3$-orbifolds}, Kyushu J. Math. 54 (2000), no. 2, 371--401.

\bibitem{Tol} J.~L.~Tollefson, \textit{Involutions on $S\sp{1}\times S\sp{2}$ and other $3$-manifolds}, Trans. Amer. Math. Soc. 183 (1973), 139--152.

\bibitem{wallpoincare3} C.~T.~C. Wall, \textit{Poincar\'{e} duality in dimension 3}, Proceedings of the Casson Fest, 1--26, Geom. Topol. Monogr., 7, Geom. Topol. Publ., Coventry, 2004.

\end{thebibliography}



\end{document}